\documentclass{amsart}
%\widowpenalty10000
%\clubpenalty10000
%%%%%%%%%%%%%%%%%%%%%%%%%%%%%%%%%%%%%%%%%%%%%%%%%%%%%%%%%%%%%%%%%
\oddsidemargin 0mm
\evensidemargin 0mm
\topmargin 0mm
\textwidth 160mm
\textheight 230mm
\tolerance=9999
%%%%%%%%%%%%%%%%%%%%%%%%%%%%%%%%%%%%%%%%%%%%%%%%%%%%%%%%%%%%%%%%%
\usepackage{amssymb,amstext,amsmath,amscd,amsthm,amsfonts,bm,enumerate,graphicx,latexsym,stmaryrd,multicol}

%%%%%%%%%%%%%%%%%%%%%%%%%%%%%%%%%%%%
%\usepackage{hyperref}
%\usepackage{comment} 
%\usepackage{cleveref}  
%\hypersetup{colorlinks=true} 
%%%%%%%%%%%%%%%%%%%%%%%%%%%%%%%%%%%%

\usepackage[usenames]{color}
\usepackage[all]{xy}
%\usepackage{showkeys}
%\renewcommand*{\thefootnote}{$\clubsuit$}
%\renewcommand{\thefootnote}{\fnsymbol{footnote}}
%%%%%%%%%%%%%%%%%%%%%%%%%%%%%%%%%%%%%%%%%%%%%%%%%%%%%%%%%%%%%%%%%
%\newcommand{\com}[1]{{\color{green} #1}}
%\newcommand{\old}[1]{{\color{red} #1}}
%\newcommand{\new}[1]{{\color{blue} #1}}
%%%%%%%%%%%%%%%%%%%%%%%%%%%%%%%%%%%%%%%%%%%%%%%%%%%%%%%%%%%%%%%%%
\newtheorem{thm}{Theorem}[section]
\newtheorem{lem}[thm]{Lemma}

\newtheorem{cor}[thm]{Corollary}
%%%%%%%%%%%%%%%%%%%%%%%%%%%%%%%%%%%%%%%%%%%%%%%%%%%%%%%%%%%%%%%%%
\theoremstyle{definition}

\newtheorem{rem}[thm]{Remark}

\newtheorem*{claim*}{Claim}
%\newtheorem{claim}{Claim}
%%%%%%%%%%%%%%%%%%%%%%%%%%%%%%%%%%%%%%%%%%%%%%%%%%%%%%%%%%%
\theoremstyle{remark}

\newtheorem*{ac}{Acknowlegments}

%%%%%%%%%%%%%%%%%%%%%%%%%%%%%%%%%%%%%%%%%%%%%%%%%%%%%%%%%%%

\numberwithin{equation}{thm}
%%%%%%%%%%%%%%%%%%%%%%%%%%%%%%%%%%%%%%%%%%%%%%%%%%%%%%%%%%%
\begin{document}
\allowdisplaybreaks
\title{Criteria for finite injective dimension of modules over a local ring}
\author{Shinnosuke Kosaka}
\address[Kosaka]{Graduate School of Mathematics, Nagoya University, Furocho, Chikusaku, Nagoya 464-8602, Japan}
\email{kosaka.shinnosuke.d2@s.mail.nagoya-u.ac.jp}
\thanks{2020 {\em Mathematics Subject Classification.} 13D05, 13D07, 13H10}
\author{Yuki Mifune}
\address[Mifune]{Graduate School of Mathematics, Nagoya University, Furocho, Chikusaku, Nagoya 464-8602, Japan}
\email{yuki.mifune.c9@math.nagoya-u.ac.jp}
\author{Kenta Shimizu}
\address[Shimizu]{Graduate School of Mathematics, Nagoya University, Furocho, Chikusaku, Nagoya 464-8602, Japan}
\email{kenta.shimizu.c1@math.nagoya-u.ac.jp}
\thanks{{\em Key words and phrases.} Cohen--Macaulay module, Ext module, injective dimension, multiplicity, type}
\begin{abstract}
Let $R$ be a commutative Noetherian local ring. We prove that the finiteness of the injective dimension of a finitely generated $R$-module $C$ is determined by the existence of a Cohen--Macaulay module $M$ that satisfies an inequality concerning multiplicity and type, together with the vanishing of finitely many Ext modules. As applications, we recover a result of Rahmani and Taherizadeh and provide sufficient conditions for a finitely generated $R$-module to have finite injective dimension.
\end{abstract}

\maketitle
\section{Introduction}
Let $R$ be a commutative Noetherian local ring.
One of the most celebrated results characterizing the Cohen--Macaulay property of $R$ through the existence of special $R$-modules is the Bass conjecture, which was proved affirmatively using the Peskine--Szpiro intersection theorem (see \cite[Corollary 9.6.2, Remarks 9.6.4]{BH} for details).
Furthermore, Roberts \cite{R} characterized the Cohen--Macaulay property of $R$ by the existence of a Cohen--Macaulay module with finite projective dimension. Following this, Takahashi \cite{T2004} characterized the Gorenstein property of $R$ in terms of $\operatorname{G}$-dimension.
%%%%%%%%%%%%%%%%%%%%
The following result is part of \cite[Theorem 3.4]{RT}, which provides a criterion for a semidualizing $R$-module $C$ to be dualizing.
\begin{thm}[Rahmani and Taherizadeh]\label{RT_int}
Let $R$ be a Noetherian local ring and $C$ a semidualizing $R$-module. Assume that $C$ has type one and there exists a Cohen--Macaulay $R$-module $M$ of finite $\operatorname{G}_{C}$-dimension. Then $R$ is Cohen--Macaulay and $C$ is a canonical module of $R$.
\end{thm}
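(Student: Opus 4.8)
The plan is to establish the two conclusions in turn: first that $R$ is Cohen--Macaulay, using only the existence of the Cohen--Macaulay module $M$ of finite $\operatorname{G}_C$-dimension, and then that $C$ is canonical, using Cohen--Macaulayness together with the type-one hypothesis. For the first part I would invoke the Auslander--Bridger formula for $\operatorname{G}_C$-dimension: since $M\neq 0$ and $\operatorname{G}_C\text{-}\dim_R M<\infty$, one has $\operatorname{G}_C\text{-}\dim_R M=\operatorname{depth} R-\operatorname{depth}_R M$. As $M$ is Cohen--Macaulay, $\operatorname{depth}_R M=\dim_R M$, whence $\operatorname{depth} R=\operatorname{G}_C\text{-}\dim_R M+\dim_R M$. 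I would then use the inequality $\dim R\le \operatorname{G}_C\text{-}\dim_R M+\dim_R M$, the $\operatorname{G}_C$-analogue of the intersection inequality underlying the theorems of Roberts and Takahashi; combined with the trivial $\operatorname{depth} R\le\dim R$, this forces $\operatorname{depth} R=\dim R$, so $R$ is Cohen--Macaulay.

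Proving this last inequality is the main obstacle. I expect to deduce it from the New Intersection Theorem, manufacturing out of a finite resolution of $M$ by totally $C$-reflexive modules a bounded complex of free modules with finite-length homology and nonzero bottom homology of the required length; alternatively one can try to reduce the $\operatorname{G}_C$-dimension statement to an ordinary projective-dimension statement via Foxby equivalence and localization. Everything else in the first part is formal.

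For the second part I would pass to the completion $\widehat{R}$, which is Cohen--Macaulay and admits a canonical module $\omega$; the module $\widehat{C}=C\otimes_R\widehat{R}$ is semidualizing over $\widehat{R}$ and still has type one, and $\operatorname{id}_R C=\operatorname{id}_{\widehat{R}}\widehat{C}$. Over $\widehat{R}$ the semidualizing module $\widehat{C}$ is maximal Cohen--Macaulay, so $\operatorname{Ext}^{>0}_{\widehat{R}}(\widehat{C},\omega)=0$ and the $\omega$-dual $\widehat{C}^{\dagger}=\operatorname{Hom}_{\widehat{R}}(\widehat{C},\omega)$ is again maximal Cohen--Macaulay. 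Local duality identifies $\operatorname{type}(\widehat{C})$ with the minimal number of generators of $\widehat{C}^{\dagger}$ (for $M=\widehat{C}=\mathbf{R}\!\operatorname{Hom}(\widehat{C}^{\dagger},\omega)$, the top cohomology of $\mathbf{R}\!\operatorname{Hom}(k\otimes^{\mathbf{L}}\widehat{C}^{\dagger},\omega)$ picks out $(k\otimes_{\widehat R}\widehat{C}^{\dagger})^{\vee}$). Hence type one means $\widehat{C}^{\dagger}$ is cyclic, say $\widehat{C}^{\dagger}\cong\widehat{R}/I$. The duality adjunction for $\omega$ gives $\mathbf{R}\!\operatorname{Hom}(\widehat{C}^{\dagger},\widehat{C}^{\dagger})\simeq\mathbf{R}\!\operatorname{Hom}(\widehat{C},\widehat{C})\simeq\widehat{R}$, so $\widehat{C}^{\dagger}$ is itself semidualizing; but for a cyclic semidualizing module the homothety map $\widehat{R}\to\operatorname{Hom}(\widehat{R}/I,\widehat{R}/I)=\widehat{R}/I$ must be bijective, forcing $I=0$. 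Therefore $\widehat{C}^{\dagger}\cong\widehat{R}$ and $\widehat{C}\cong\widehat{C}^{\dagger\dagger}\cong\operatorname{Hom}_{\widehat{R}}(\widehat{R},\omega)=\omega$.

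In particular $\operatorname{id}_{\widehat{R}}\widehat{C}<\infty$, so $\operatorname{id}_R C<\infty$; a semidualizing module of finite injective dimension is dualizing, so $C$ is a canonical module of the Cohen--Macaulay ring $R$. I would expect the intended argument in this paper to bypass the ad hoc inequality of the first part by appealing to its main theorem: the finiteness of $\operatorname{G}_C$-dimension of $M$ supplies the required Ext vanishing (namely $\operatorname{Ext}^i_R(M,C)=0$ for $i>\operatorname{G}_C\text{-}\dim_R M$), while type one feeds the multiplicity--type inequality, so that the criterion yields $\operatorname{id}_R C<\infty$ directly, after which $R$ is Cohen--Macaulay and $C$ is canonical for the reason just given.
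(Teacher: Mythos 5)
Your first part contains the fatal gap. Everything hinges on the inequality $\dim R\le \operatorname{G}_{C}\text{-}\dim_{R}M+\dim_{R}M$, which you yourself flag as ``the main obstacle,'' and neither of your proposed attacks can close it. A finite resolution by totally $C$-reflexive modules does \emph{not} yield a bounded complex of finitely generated free modules with finite-length homology: totally $C$-reflexive modules are in general very far from having finite free resolutions (if they did, $\operatorname{G}_{C}$-dimension would collapse to projective dimension), so the New Intersection Theorem simply has nothing to act on. Foxby equivalence likewise converts finite $\operatorname{G}_{C}$-dimension into finite $\operatorname{G}$-dimension (membership in Auslander/Bass classes), never into finite projective dimension. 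Indeed, already for $C=R$ the inequality you want is the $\operatorname{G}$-dimension analogue of the intersection theorem, which is a well-known open problem; this is exactly why both Takahashi's theorem and the statement at hand carry the type-one hypothesis. The tell-tale sign is that your Part 1 never uses $\operatorname{r}_{R}(C)=1$: without that hypothesis, the assertion ``a Cohen--Macaulay module of finite $\operatorname{G}_{C}$-dimension forces $R$ to be Cohen--Macaulay'' is not known, so no argument omitting it can currently be completed.

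The paper (Remark \ref{recover_RT}) avoids proving Cohen--Macaulayness separately. It cuts $M$ down by a maximal $M$-sequence $\bm{x}\subseteq\operatorname{ann}_{R}M$ (also an $R$- and $C$-sequence) to reduce to $\ell(M)<\infty$, sets $N=\operatorname{Ext}_{R}^{r}(M,C)$ with $r=\operatorname{depth}R$, and uses total reflexivity over $R/(\bm{x})$ to get $\operatorname{Ext}_{R}^{r+1}(M,C)=0=\operatorname{Ext}_{R}^{r+1}(N,C)$ together with $\operatorname{Ext}_{R}^{r}(N,C)\cong M$. Since $\ell(M)$ and $\ell(N)$ are comparable integers, the type-one hypothesis guarantees that condition (a) of Theorem \ref{main thm} holds for at least one of $M$, $N$, while condition (b) holds for both; the main theorem then delivers, in one stroke, that $R$ is Cohen--Macaulay and $C$ has finite injective dimension, and \cite[Proposition 3.3.13]{BH} (a maximal Cohen--Macaulay module of finite injective dimension and type one is canonical) finishes. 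Your closing paragraph guesses at this strategy but glosses over its crux: even with type one, the multiplicity inequality $\operatorname{e}(M)\le\operatorname{e}(\operatorname{Ext}_{R}^{r-s}(M,C))$ need not hold for the given $M$ --- the dichotomy between $M$ and its dual $N$ is what makes the criterion applicable. Your second part (completion plus canonical duality, assuming $R$ is already Cohen--Macaulay) is essentially correct, but it rests on the unproved first part, so the proposal as a whole does not constitute a proof.
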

%%%%%%%%%%%%%%%%%%%%
Here, a finitely generated $R$-module $C$ is called {\em semidualizing} if the natural homomorphism $R\to\operatorname{End}_{R}(C)$ is an isomorphism and one has $\operatorname{Ext}_{R}^{i}(C,C)=0$ for all $i>0$.
A finitely generated $R$-module $X$ is said to be {\em totally $C$-reflexive} if the natural homomorphism $X\to\operatorname{Hom}_{R}(\operatorname{Hom}_{R}(X,C),C)$ is an isomorphism and one has $\operatorname{Ext}_{R}^{i}(X,C)=\operatorname{Ext}_{R}^{i}(\operatorname{Hom}_{R}(X,C),C)=0$ for all $i>0$.
The {\em $\operatorname{G}_{C}$-dimension} of an $R$-module $M$ is defined as the infimum of the lengths of resolutions of $M$ by totally $C$-reflexive modules.
The case $C=R$ in Theorem \ref{RT_int} was proved by Takahashi \cite{T2004}, and it gives a characterization of Gorenstein rings.
%%%%%%%%%%%%%%%%%%%%
The main result of this paper is the following theorem.
\begin{thm}[Theorem \ref{main thm}]\label{main thm int}
Let $R$ be a Noetherian local ring, and $C$ a finitely generated $R$-module with $\operatorname{depth}_{R}C=r$.
Assume that there exists a Cohen--Macaulay $R$-module $M$ of dimension $s$ satisfying the following two conditions:
\begin{enumerate}[\rm(a)]
\item
The inequality $\operatorname{r}_{R}(C)\operatorname{e}(M)\leq \operatorname{e}(\operatorname{Ext}_{R}^{r-s}(M,C))$ holds.
\item
One has $\operatorname{Ext}_{R}^{i}(M,C)=0$ for all $r-s+1\leq i \leq r+1$.
\end{enumerate}
Then $R$ is Cohen--Macaulay and $C$ is a maximal Cohen--Macaulay $R$-module with finite injective dimension.
Moreover, every Cohen--Macaulay $R$-module $N$ of dimension $s$ satisfies the conditions obtained by replacing $M$ with $N$ in the two conditions above.
\end{thm}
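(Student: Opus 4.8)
The plan is to reduce to the case $s=0$, extract from the hypotheses a single vanishing Bass number of $C$, and then show that this single vanishing already forces $C$ to be maximal Cohen--Macaulay of finite injective dimension. First I would set up a downward induction on $s=\dim M$. Assuming $s\ge 1$, after the harmless faithfully flat extension making the residue field infinite, choose $x\in\mathfrak m$ that is superficial for $M$ and for $\operatorname{Ext}_R^{r-s}(M,C)$ and a nonzerodivisor on $M$, so that $M/xM$ is Cohen--Macaulay of dimension $s-1$ with $\operatorname{e}(M/xM)=\operatorname{e}(M)$. The key point is that condition (b) makes the two change-of-module long exact sequences for $0\to M\xrightarrow{x}M\to M/xM\to 0$ collapse: writing $E^{i}=\operatorname{Ext}_R^{i}(M,C)$, one gets $\operatorname{Ext}_R^{i}(M/xM,C)=0$ for $r-s+2\le i\le r+1$, while $\operatorname{Ext}_R^{(r-s)+1}(M/xM,C)\cong E^{r-s}/xE^{r-s}$ precisely because $E^{r-s+1}=0$. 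Since $r-(s-1)=(r-s)+1$, the pair $(M/xM,C)$ again satisfies (a) and (b) with the same $r$ and the same type $\operatorname{r}_R(C)$, and superficiality gives $\operatorname{e}(E^{r-s}/xE^{r-s})=\operatorname{e}(E^{r-s})$, so the numerical inequality is unchanged. Iterating reduces me to $s=0$, i.e.\ $M$ of finite length, keeping $C$ and $r=\operatorname{depth}_R C$ fixed; now (b) reads $\operatorname{Ext}_R^{r+1}(M,C)=0$ and (a) reads $\operatorname{r}_R(C)\,\ell(M)\le \ell(\operatorname{Ext}_R^{r}(M,C))$.

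In this finite-length case I would run a dévissage along a composition series. The base case $M=k$ gives $\ell(\operatorname{Ext}_R^{r}(k,C))=\dim_k\operatorname{Ext}_R^{r}(k,C)=\operatorname{r}_R(C)$ since $r=\operatorname{depth}_R C$, and the long exact sequences yield the upper bound $\ell(\operatorname{Ext}_R^{r}(M,C))\le \operatorname{r}_R(C)\,\ell(M)$ in general, so (a) forces equality. Comparing the equality for $M$ with that for a colength-one submodule $M'\subset M$ in the four-term exact sequence attached to $0\to M'\to M\to k\to 0$, and using $\operatorname{Ext}_R^{r+1}(M,C)=0$, a length count forces the relevant connecting map to vanish and gives $\operatorname{Ext}_R^{r+1}(k,C)=0$, that is $\mu^{r+1}(\mathfrak m,C)=0$ for the \emph{original} $C$.

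The decisive step, which I expect to be the main obstacle, is to deduce from the single vanishing $\operatorname{Ext}_R^{r+1}(k,C)=0$ (with $r=\operatorname{depth}_R C$) that $R$ is Cohen--Macaulay and $C$ is maximal Cohen--Macaulay of finite injective dimension. I would pass to $\widehat R$ and its dualizing complex $D$, set $C^{\dagger}=\operatorname{RHom}_{\widehat R}(\widehat C,D)$, and use local duality to identify the Bass numbers of $\widehat C$ with the Betti numbers of $C^{\dagger}$; thus $\beta_{r}(C^{\dagger})=\operatorname{r}_R(C)\ne 0$ and $\beta_{r+1}(C^{\dagger})=0$, while $\inf C^{\dagger}=r$. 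In the minimal free resolution of the complex $C^{\dagger}$ the vanishing $\beta_{r+1}=0$ disconnects the complex, forcing $C^{\dagger}\simeq \widehat R^{\,\operatorname{r}_R(C)}[r]\oplus G$ with $\inf G\ge r+2$. Applying $\operatorname{RHom}(-,D)$ and using that $\widehat C=(C^\dagger)^\dagger$ is a module concentrated in degree $0$, the cohomological degrees are forced to collapse, which makes $D$ a shift of a module; hence $\widehat R$, and so $R$, is Cohen--Macaulay, $D$ is a shift of the canonical module $\omega$, the summand $G$ is acyclic, and $\widehat C\cong\omega^{\oplus\operatorname{r}_R(C)}$. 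In particular $C$ is maximal Cohen--Macaulay with $\operatorname{id}_R C=\dim R<\infty$. The subtle point here is that a single vanishing Bass number does \emph{not} in general force finite injective dimension for an arbitrary complex; it is exactly the fact that $C$ is an honest module, via biduality, that rules out an intermediate gap in its Bass numbers.

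Finally, the ``moreover'' clause follows once $R$ is Cohen--Macaulay with canonical module $\omega$ and $\widehat C\cong\omega^{\oplus \operatorname{r}_R(C)}$: for any Cohen--Macaulay $N$ of dimension $s$ one has $\operatorname{Ext}_R^{i}(N,C)=0$ for $i\ne r-s$ by Cohen--Macaulay duality, so (b) holds, while $\operatorname{Ext}_{\widehat R}^{r-s}(\widehat N,\omega)$ is the canonical module of $\widehat N$, whose multiplicity equals $\operatorname{e}(N)$; taking $\operatorname{r}_R(C)$ copies gives $\operatorname{e}(\operatorname{Ext}_R^{r-s}(N,C))=\operatorname{r}_R(C)\operatorname{e}(N)$, which is (a) with equality.
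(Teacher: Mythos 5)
Your proposal is correct in substance, and for the first half it follows essentially the paper's own route: your one-superficial-element-at-a-time induction reducing to the finite length case is the elementwise version of the paper's Lemma 2.2 combined with the reduction step in the proof of Theorem 2.4 (there a full system of parameters generating a reduction of $\mathfrak{m}$ with respect to both $M$ and $\operatorname{Ext}_R^{r-s}(M,C)$ is shown to be a regular sequence on both modules at once), and your d\'evissage along a composition series in the finite-length case is exactly the paper's Lemma 2.3. Where you genuinely diverge is the decisive step. The paper disposes of it by citation: by Fossum--Foxby--Griffith--Reiten (no gaps in Bass numbers above the depth), $\operatorname{Ext}_R^{r+1}(k,C)=0$ with $r=\operatorname{depth}_RC$ gives $\operatorname{id}_RC\le r<\infty$, and then the Bass conjecture and the Bass formula yield that $R$ is Cohen--Macaulay and $C$ is maximal Cohen--Macaulay. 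You instead reprove the needed special case over $\widehat R$ with the dualizing complex: the vanishing of $\beta_{r+1}(C^{\dagger})$ and of $\beta_{i}(C^{\dagger})$ for $i<r$ splits the minimal free resolution as $C^{\dagger}\simeq\widehat R^{\,t}[r]\oplus G$, and biduality then forces $H(D)$ into a single degree (so $\widehat R$ is Cohen--Macaulay of dimension $r$) and kills $G$, since $H(G)$ would have to sit in degrees at least $r+2$ while $H(C^{\dagger})$ is concentrated in degree $r$ by local duality. This costs the machinery of minimal semifree resolutions, dagger duality and local duality, but it buys independence from the intersection theorem underlying the Bass conjecture; the crucial point making this work without any amplitude inequality is that the split summand $\widehat R^{\,t}[r]$ is free, so its dual is literally $D^{\,t}[-r]$. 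For the ``moreover'' clause both arguments use Cohen--Macaulay duality to kill the Ext modules; the multiplicity equality $\operatorname{e}(\operatorname{Ext}_R^{r-s}(N,C))=\operatorname{r}_R(C)\operatorname{e}(N)$, which you quote as a known property of canonical modules, is exactly what the paper's closing Claim proves (by cutting down by a reduction and counting lengths via Matlis duality over the Artinian quotient); it also follows from the associativity formula, so this is a gloss, not a gap.

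One point in your induction step does need repair. The assertion that ``superficiality gives $\operatorname{e}(E^{r-s}/xE^{r-s})=\operatorname{e}(E^{r-s})$'' is not true on the strength of superficiality alone: if $\dim E^{r-s}\le 1$ and $x$ is a zerodivisor on $E^{r-s}$, the multiplicity can jump or (in dimension zero) drop, and a drop would destroy condition (a). You must add that $x$ is in fact $E^{r-s}$-regular; this is available from the very long exact sequence you are using, because $\operatorname{Ext}_R^{r-s}(M/xM,C)=0$ by Ischebeck's theorem ($r-s<\operatorname{depth}_RC-\dim M/xM$), so multiplication by $x$ is injective on $E^{r-s}$. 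This is precisely item (i) of the paper's Lemma 2.2, and with it your superficial element is both regular and a reduction-generator on $E^{r-s}$, so the multiplicity is preserved in every dimension.
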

%%%%%%%%%%%%%%%%%%%%
Here, $\operatorname{r}_{R}(C)$ is the type of $C$, and $\operatorname{e}(M)$ is the multiplicity of $M$ with respect to the maximal ideal of $R$.
Note that if $R$ is a homomorphic image of a Gorenstein ring, then the conclusion says that $C$ is a finite direct sum of copies of a canonical module of $R$.
A notable improvement in the above theorem is that only finitely many vanishing conditions of Ext modules are required to determine the finiteness of the injective dimension of $C$. This is in contrast to the case where the $R$-module $M$ has finite $\operatorname{G}_{C}$-dimension.
Theorem \ref{main thm int} not only recovers Theorem \ref{RT_int} (see Remark \ref{recover_RT}) but also provides several other applications.
The following corollary gives sufficient conditions for a finitely generated $R$-module to have finite injective dimension.
%%%%%%%%%%%%%%%%%%%%
\begin{cor}[Corollaries \ref{cor_Gor}, \ref{cor:case_M_is_R}, \ref{cor:rank}, and \ref{cor_M=C}]\label{cor_int}
Let $R$ be a Noetherian local ring and $C$ a finitely generated $R$-module. Then the following hold.
\begin{enumerate}[\rm(1)]
\item 
Suppose that there exists a Cohen--Macaulay $R$-module $M$ satisfying the following three conditions:
\begin{enumerate}[\rm(i)]
\item
The dimension of $M$ is equal to $\operatorname{depth}R$.
\item
The inequality $\operatorname{r}(R)\operatorname{e}(M)\leq \operatorname{e}(\operatorname{Hom}_{R}(M,R))$ holds.
\item
One has $\operatorname{Ext}_{R}^{i}(M,R)=0$ for all $1\leq i \leq \operatorname{depth}R+1$.
\end{enumerate}
Then $R$ is Gorenstein.
\item
Suppose that $C$ is a Cohen--Macaulay $R$-module of dimension $n\geq0$ satisfying the following two conditions:
\begin{enumerate}[\rm(i)]
\item
The inequality $\operatorname{r}_{R}(C)\operatorname{e}(C)\leq \operatorname{e}(\operatorname{End}_{R}(C))$ holds.
\item
One has $\operatorname{Ext}_{R}^{i}(C,C)=0$ for all $1\leq i \leq n+1$.
\end{enumerate}
Then $R$ is Cohen--Macaulay and $C$ is a maximal Cohen--Macaulay $R$-module with finite injective dimension.
\item
Assume that $R$ is Cohen--Macaulay and $C$ is a maximal Cohen--Macaulay $R$-module. If either of the following two conditions holds, then $C$ has finite injective dimension.
\begin{enumerate}[\rm(i)]
\item
The inequality $\operatorname{r}_{R}(C)\operatorname{e}(R)\leq \operatorname{e}(C)$ holds.
\item
The $R$-module $C$ has a rank and the inequality $\operatorname{r}_{R}(C)\leq \operatorname{rank}_{R}C$ holds.
\end{enumerate}
\end{enumerate}
\end{cor}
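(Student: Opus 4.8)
The strategy is to obtain each assertion as a specialization of Theorem~\ref{main thm int} applied to a suitable choice of Cohen--Macaulay module, after observing that in every case the numerical index $r-s$ collapses to $0$, so that the Ext module appearing in condition~(a) is simply a Hom module. Once this is arranged, conditions~(i)--(iii) of each item translate verbatim into conditions~(a) and~(b) of the main theorem, and the stated conclusions follow; the only point requiring genuine input is the multiplicity identity used for~(3)(ii).

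For~(1) I would put $C=R$, so that $r=\operatorname{depth}R$, and feed in the given module $M$. Since $\dim M=\operatorname{depth}R=r$ we have $s=r$, whence $r-s=0$ and $\operatorname{Ext}_R^{r-s}(M,R)=\operatorname{Hom}_R(M,R)$. Thus~(ii) is exactly condition~(a), while~(iii), asserting vanishing for $1\le i\le\operatorname{depth}R+1=r+1$, is exactly condition~(b). Theorem~\ref{main thm int} then gives that $R$ is Cohen--Macaulay and that $C=R$ has finite injective dimension, i.e.\ $\operatorname{id}_R R<\infty$; by Bass's characterization of Gorenstein rings this forces $R$ to be Gorenstein.

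For~(2) I would set $M=C$. As $C$ is Cohen--Macaulay of dimension $n$ we have $r=\operatorname{depth}_R C=n=\dim C=s$, so again $r-s=0$ and $\operatorname{Ext}_R^{0}(C,C)=\operatorname{End}_R(C)$; hence~(i) and~(ii) are precisely conditions~(a) and~(b), and the conclusion is immediate. For~(3)(i) I would instead take $M=R$: since $R$ is Cohen--Macaulay and $C$ is maximal Cohen--Macaulay, $r=\operatorname{depth}_R C=\dim R$ and $s=\dim M=\dim R$, so $r-s=0$ and $\operatorname{Ext}_R^{0}(R,C)=C$. Condition~(a) becomes exactly the inequality in~(i), and condition~(b) holds automatically because $\operatorname{Ext}_R^{i}(R,C)=0$ for every $i>0$; Theorem~\ref{main thm int} then yields that $C$ has finite injective dimension.

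Finally, for~(3)(ii) I would reduce to~(3)(i) by showing that its hypotheses force the inequality $\operatorname{r}_R(C)\operatorname{e}(R)\le\operatorname{e}(C)$. Because $R$ is Cohen--Macaulay it is unmixed, so every minimal prime $\mathfrak p$ of maximal dimension lies in $\operatorname{Ass}R$, and the hypothesis that $C$ has a rank means $C_{\mathfrak p}$ is free of rank $\operatorname{rank}_R C$ over $R_{\mathfrak p}$ at each such $\mathfrak p$. The associativity formula for multiplicities then gives $\operatorname{e}(C)=(\operatorname{rank}_R C)\,\operatorname{e}(R)$. Combining this with $\operatorname{e}(R)\ge 1$ and the hypothesis $\operatorname{r}_R(C)\le\operatorname{rank}_R C$ yields $\operatorname{r}_R(C)\operatorname{e}(R)\le(\operatorname{rank}_R C)\,\operatorname{e}(R)=\operatorname{e}(C)$, so~(3)(i) applies. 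I expect this last multiplicity identity for modules with a rank to be the only non-formal ingredient; every other step is bookkeeping of the indices $r$ and $s$ and the identification of the degree-zero Ext module with the corresponding Hom (or End) module.
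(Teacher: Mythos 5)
Your proposal is correct and follows essentially the same route as the paper: each part is obtained by specializing the main theorem (taking $C=R$ with the given $M$ for (1), $M=C$ for (2), and $M=R$ for (3)(i), in each case with $r-s=0$ so that the relevant Ext module is a Hom/End module), and (3)(ii) is reduced to (3)(i) via the identity $\operatorname{e}(C)=\operatorname{rank}_{R}(C)\operatorname{e}(R)$. The only cosmetic difference is that the paper cites \cite[Corollary 4.7.9]{BH} for this multiplicity identity, whereas you rederive it from the associativity formula; both are fine.
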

%%%%%%%%%%%%%%%%%%%%
%%%%%%%%%%%%%%%%%%%%
\section{Proof of main theorem}
%%%%%%%%%%%%%%%%%%%%
In this section, we prove the main theorem stated in the Introduction and present a couple of corollaries. We begin with a fundamental lemma concerning the multiplicity and the length of a module.
For an $R$-module $M$ of finite length, we denote by $\ell(M)$ the length of $M$.
\begin{lem}
Let $(R,\mathfrak{m})$ be a Noetherian local ring and $M$ a Cohen--Macaulay $R$-module. Assume that there exists a sequence $\bm{x}$ of elements in $R$ which forms a system of parameters of $M$ and generates a reduction of $\mathfrak{m}$ with respect to $M$. Then the equality $\operatorname{e}(M)=\ell(M/\bm{x}M)$ holds.
\end{lem}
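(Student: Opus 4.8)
The plan is to pass from the multiplicity of $M$ with respect to $\mathfrak{m}$ to the multiplicity of $M$ with respect to the parameter ideal $\mathfrak{q}:=\bm{x}R$, and then to evaluate the latter using the Cohen--Macaulay hypothesis. Write $d:=\dim_{R}M$, so that $\bm{x}$ consists of $d$ elements and $\ell(M/\bm{x}M)<\infty$.

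First I would use the reduction hypothesis. By definition, $\bm{x}$ generating a reduction of $\mathfrak{m}$ with respect to $M$ means $\mathfrak{m}^{n+1}M=\mathfrak{q}\,\mathfrak{m}^{n}M$ for all $n\gg0$. Iterating this identity gives $\mathfrak{m}^{n}M=\mathfrak{q}^{\,n-c}\mathfrak{m}^{c}M$ for some fixed $c$ and all large $n$, whence the chain $\mathfrak{q}^{n}M\subseteq\mathfrak{m}^{n}M\subseteq\mathfrak{q}^{\,n-c}M$ squeezes $n\mapsto\ell(M/\mathfrak{m}^{n}M)$ between two Hilbert--Samuel functions of $\mathfrak{q}$ that differ only by a bounded shift in the argument. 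These share a leading coefficient, and so $\operatorname{e}(M)=\operatorname{e}(\mathfrak{q};M)$, where $\operatorname{e}(\mathfrak{q};M)$ denotes the multiplicity of $M$ with respect to $\mathfrak{q}$.

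Next I would exploit that $M$ is Cohen--Macaulay. Since $\bm{x}$ is a system of parameters for $M$ and $M$ is Cohen--Macaulay, the sequence $\bm{x}$ is $M$-regular. For a parameter ideal generated by an $M$-regular sequence, the associated graded module $\operatorname{gr}_{\mathfrak{q}}(M)=\bigoplus_{n\geq0}\mathfrak{q}^{n}M/\mathfrak{q}^{n+1}M$ is isomorphic to the polynomial module $(M/\mathfrak{q}M)[T_{1},\dots,T_{d}]$, which is free over $M/\mathfrak{q}M$. Reading off the leading term of its Hilbert polynomial gives $\operatorname{e}(\mathfrak{q};M)=\ell(M/\mathfrak{q}M)=\ell(M/\bm{x}M)$.

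Combining the two equalities produces $\operatorname{e}(M)=\operatorname{e}(\mathfrak{q};M)=\ell(M/\bm{x}M)$, as claimed. Both ingredients are classical (see \cite{BH}), so I do not anticipate a genuine obstacle; the step demanding the most care is the first one---verifying that a reduction preserves the multiplicity---which rests on the iterated inclusion above together with the fact that a bounded shift in the argument of a Hilbert--Samuel function leaves its leading coefficient unchanged.
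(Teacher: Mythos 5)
Your proof is correct and follows the same two-step route as the paper: first pass from $\operatorname{e}(M)$ to the multiplicity $\operatorname{e}(\bm{x},M)$ of the parameter ideal using the reduction hypothesis, then use Cohen--Macaulayness to identify $\operatorname{e}(\bm{x},M)$ with $\ell(M/\bm{x}M)$. The paper simply cites Matsumura's Theorems 14.11 and 14.13 for exactly these two facts, whereas you supply direct proofs (the Hilbert--Samuel squeeze for reduction-invariance, and the polynomial structure of $\operatorname{gr}_{\mathfrak{q}}(M)$ over a regular sequence for the second step); both are sound.
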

\begin{proof}
By \cite[Theorems 14.11 and 14.13]{Mat2}, we have $\operatorname{e}(M)=\operatorname{e}(\bm{x},M)=\operatorname{e}(\bm{x}+\operatorname{ann}_{R}M/\operatorname{ann}_{R}M,M)=\operatorname{e}(0,M/\bm{x}M)=\ell(M/\bm{x}M)$.
\end{proof}
%%%%%%%%%%%%%%%%%%%%
Next, we investigate the connection between regular sequences and the vanishing of Ext modules of a Cohen--Macaulay module.
\begin{lem}\label{lem:regular_M_to_Ext}
Let $R$ be a Noetherian local ring, and $C$ a finitely generated $R$-module with depth $r$.
Let $M$ be a Cohen--Macaulay $R$-module with dimension $s$.
Assume that one has $\operatorname{Ext}_{R}^{i}(M,C)=0$ for all $r-s+1\leq i \leq r+1$. 
Then every $M$-sequence $\bm{x}=x_{1},\ldots,x_{s}$  satisfies the following three conditions:
\begin{enumerate}[\rm(i)]
\item
The sequence $\bm{x}$ is an $\operatorname{Ext}_{R}^{r-s}(M,C)$-sequence.
\item
One has $\operatorname{Ext}_{R}^{r}(M/\bm{x}M,C)\cong \operatorname{Ext}_{R}^{r-s}(M,C)\otimes_{R}R/(\bm{x})$.
\item
One has $\operatorname{Ext}_{R}^{r+1}(M/\bm{x}M,C)=0$.
\end{enumerate}
\end{lem}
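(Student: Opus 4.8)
The plan is to induct on $s=\dim M$. The base case $s=0$ is immediate: the sequence $\bm{x}$ is empty, so (i) and (ii) hold trivially (here $M/\bm{x}M=M$ and $\operatorname{Ext}_R^{r}(M,C)\otimes_R R/(\bm{x})=\operatorname{Ext}_R^{r}(M,C)$), while (iii) is exactly the hypothesis $\operatorname{Ext}_R^{r+1}(M,C)=0$ (the index $i=r+1$ lies in the vanishing range). For the inductive step I would fix an $M$-sequence $\bm{x}=x_1,\dots,x_s$ with $s\ge 1$ and apply $\operatorname{Hom}_R(-,C)$ to the short exact sequence $0\to M\xrightarrow{x_1}M\to M/x_1M\to 0$. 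Since the self-map $M\xrightarrow{x_1}M$ induces multiplication by $x_1$ on each $\operatorname{Ext}$, the resulting long exact sequence breaks, for every $i$, into a short exact sequence
\[
0\to \operatorname{Ext}_R^{i-1}(M,C)/x_1\operatorname{Ext}_R^{i-1}(M,C)\to \operatorname{Ext}_R^{i}(M/x_1M,C)\to (0:_{\operatorname{Ext}_R^{i}(M,C)}x_1)\to 0,
\]
which is the engine of the whole argument.

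Next I would check that $M/x_1M$ satisfies the hypotheses of the lemma with $s$ replaced by $s-1$. It is Cohen--Macaulay of dimension $s-1$ and $x_2,\dots,x_s$ is an $(M/x_1M)$-sequence; feeding the range $r-s+2\le i\le r+1$ into the displayed short exact sequence and using $\operatorname{Ext}_R^{i-1}(M,C)=\operatorname{Ext}_R^{i}(M,C)=0$ there (these indices lie in $[r-s+1,r+1]$) yields $\operatorname{Ext}_R^{i}(M/x_1M,C)=0$ for $r-(s-1)+1\le i\le r+1$. The one genuinely delicate point is (i): I must show that $x_1$ is a nonzerodivisor on $\operatorname{Ext}_R^{r-s}(M,C)$, that is, $(0:_{\operatorname{Ext}_R^{r-s}(M,C)}x_1)=0$. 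This is where I would invoke Ischebeck's theorem: since $M/x_1M$ is Cohen--Macaulay of dimension $s-1$ and $\operatorname{depth}_R C=r$, one has $\operatorname{Ext}_R^{i}(M/x_1M,C)=0$ for all $i<r-(s-1)$, in particular $\operatorname{Ext}_R^{r-s}(M/x_1M,C)=0$ (the same theorem applied to $M$ shows $\operatorname{Ext}_R^{r-s}(M,C)$ is the lowest possibly nonzero cohomology, which is the natural home of the statement). Taking $i=r-s$ in the displayed short exact sequence then forces both outer terms to vanish, so $(0:_{\operatorname{Ext}_R^{r-s}(M,C)}x_1)=0$ and $x_1$ is regular on $\operatorname{Ext}_R^{r-s}(M,C)$. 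This nonzerodivisor claim is the main obstacle of the proof.

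Finally I would take $i=r-s+1$ in the displayed short exact sequence; since $\operatorname{Ext}_R^{r-s+1}(M,C)=0$ by hypothesis, the rightmost term vanishes and I obtain
\[
\operatorname{Ext}_R^{r-(s-1)}(M/x_1M,C)\cong \operatorname{Ext}_R^{r-s}(M,C)/x_1\operatorname{Ext}_R^{r-s}(M,C).
\]
Applying the induction hypothesis to $M/x_1M$ and the sequence $x_2,\dots,x_s$, and transporting its three conclusions through this isomorphism, I would close the argument. Conclusion (i) for $M/x_1M$ states that $x_2,\dots,x_s$ is regular on $\operatorname{Ext}_R^{r-s}(M,C)/x_1\operatorname{Ext}_R^{r-s}(M,C)$, which together with the regularity of $x_1$ just established shows $\bm{x}$ is an $\operatorname{Ext}_R^{r-s}(M,C)$-sequence, giving (i). Conclusion (ii) for $M/x_1M$, combined with the isomorphism and the identity $M/\bm{x}M=(M/x_1M)/(x_2,\dots,x_s)(M/x_1M)$, gives $\operatorname{Ext}_R^{r}(M/\bm{x}M,C)\cong\operatorname{Ext}_R^{r-s}(M,C)\otimes_R R/(\bm{x})$, giving (ii). And conclusion (iii) for $M/x_1M$ is literally the statement $\operatorname{Ext}_R^{r+1}(M/\bm{x}M,C)=0$, giving (iii).
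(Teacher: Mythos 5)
Your argument follows the same route as the paper's proof: induction on $s$, Ischebeck's theorem to kill the Ext modules in degrees below $r-s+1$, the long exact sequence induced by $0\to M\xrightarrow{x_1}M\to M/x_1M\to 0$ (packaged into your short exact sequences), the resulting isomorphism $\operatorname{Ext}_R^{r-s+1}(M/x_1M,C)\cong\operatorname{Ext}_R^{r-s}(M,C)\otimes_R R/(x_1)$, and the induction hypothesis applied to $M/x_1M$ with the sequence $x_2,\dots,x_s$. All of those steps check out and match the paper.

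The one genuine omission is the nonvanishing of $\operatorname{Ext}_R^{r-s}(M,C)$, which is exactly what the paper's base case supplies and yours skips. Under the definition of regular sequence used by the paper's references (Matsumura, Bruns--Herzog), an $N$-sequence $\bm{x}$ must satisfy $N/\bm{x}N\neq0$ in addition to the nonzerodivisor conditions; since the $x_i$ lie in $\mathfrak{m}$, by Nakayama this amounts to $N\neq0$. Your argument only produces the nonzerodivisor conditions, i.e., that $\bm{x}$ is a \emph{weak} $\operatorname{Ext}_R^{r-s}(M,C)$-sequence. The issue is concentrated in your claim that (i) is ``trivial'' when $s=0$: for the empty sequence to be an $\operatorname{Ext}_R^{r}(M,C)$-sequence one still needs $\operatorname{Ext}_R^{r}(M,C)\neq0$, and that is not a triviality --- it is \cite[Theorem 16.6]{Mat2} (Rees's theorem) applied to the nonzero finite-length module $M$: the ideal $\operatorname{ann}_R M$ is $\mathfrak{m}$-primary, so the first nonvanishing $\operatorname{Ext}_R^{i}(M,C)$ occurs exactly at $i=\operatorname{depth}_R C=r$. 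Once this citation is added to the base case, nonvanishing propagates up your induction automatically (conclusion (i) of the induction hypothesis, read through the isomorphism in your last display, includes it), and your proof coincides with the paper's. The distinction is not pedantic here: in the proof of Theorem \ref{main thm}, conclusion (i) is used to see that $\operatorname{Ext}_R^{r-s}(M,C)$ is a Cohen--Macaulay module of dimension $s$, so that its multiplicity can be computed as the length of its quotient by $\bm{x}$; a weak regular sequence on a possibly zero module would not justify that step.
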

\begin{proof}
By induction on $s\geq0$. When $s=0$, the module $\operatorname{Ext}_{R}^{r}(M,C)$ is nonzero by \cite[Theorem 16.6]{Mat2}. Thus, the assertions hold.
Suppose that $s>0$. 
By virtue of \cite[Theorem 17.1]{Mat2}, one has $\operatorname{Ext}_{R}^{i}(M/x_{1}M,C)=0$ for all $i<r-s+1$. 
Combining the assumption with the exact sequence
$0\to M\xrightarrow{x_{1}}M\to M/x_{1}M\to 0$, 
we obtain the following exact sequence
\[
0\to \operatorname{Ext}_{R}^{r-s}(M,C)\xrightarrow{x_{1}} \operatorname{Ext}_{R}^{r-s}(M,C) \to \operatorname{Ext}_{R}^{r-s+1}(M/x_{1}M,C)\to 0,
\]
and $\operatorname{Ext}_{R}^{i}(M/x_{1}M,C)=0$ for all $r-(s-1)+1\leq i \leq r+1$. Hence, $x_{1}$ is an $\operatorname{Ext}_{R}^{r-s}(M,C)$-regular element and one has $\operatorname{Ext}_{R}^{r-s+1}(M/x_{1}M,C)\cong \operatorname{Ext}_{R}^{r-s}(M,C)\otimes_{R}R/(x_{1})$.
Applying the induction hypothesis to $M/x_{1}M$, we have the following:
\begin{enumerate}
\item
The sequence $x_{2},\ldots,x_{s}$ is an $\operatorname{Ext}_{R}^{r-s+1}(M/x_{1}M,C)$-sequence.
\item
One has $\operatorname{Ext}_{R}^{r}(M/\bm{x}M,C)\cong \operatorname{Ext}_{R}^{r-s+1}(M/x_{1}M,C)\otimes_{R}R/(x_{2},\ldots,x_{s})$.
\item
One has $\operatorname{Ext}_{R}^{r+1}(M/\bm{x}M,C)=0$.
\end{enumerate}
Combining the isomorphism $\operatorname{Ext}_{R}^{r-s+1}(M/x_{1}M,C)\cong \operatorname{Ext}_{R}^{r-s}(M,C)\otimes_{R}R/(x_{1})$ with (2) above, we obtain the isomorphism $\operatorname{Ext}_{R}^{r}(M/\bm{x}M,C)\cong \operatorname{Ext}_{R}^{r-s}(M,C)\otimes_{R}R/(\bm{x})$. This yields that $\operatorname{Ext}_{R}^{r-s}(M,C)$ is nonzero. Combining this with the fact that $x_{1}$ is an $\operatorname{Ext}_{R}^{r-s}(M,C)$-regular element and (1) above, the sequence $\bm{x}$ is an $\operatorname{Ext}_{R}^{r-s}(M,C)$-sequence.
\end{proof}
%%%%%%%%%%%%%%%%%%%%
Let $(R,\mathfrak{m},k)$ be a Noetherian local ring. For a nonzero finitely generated $R$-module $M$, we denote by $\operatorname{r}_{R}(M)$ the type of $M$, that is, $\operatorname{r}_{R}(M)=\operatorname{dim}_{k}\operatorname{Ext}_{R}^{r}(k,M)$, where $r=\operatorname{depth}_{R}M$.
The following lemma plays a crucial role in the proof of our main theorem.
\begin{lem}\label{lem:case_M_is_finite_length}
Let $(R,\mathfrak{m},k)$ be a Noetherian local ring, and $C$ a finitely generated $R$-module with $\operatorname{depth}_{R}C=r$.
Let $M$ be a nonzero $R$-module of finite length.
Assume that the following two conditions hold.
\begin{enumerate}[\rm(a)]
\item
The inequality $\operatorname{r}_{R}(C)\ell(M)\leq \ell(\operatorname{Ext}_{R}^{r}(M,C))$ holds.
\item
One has $\operatorname{Ext}_{R}^{r+1}(M,C)=0$.
\end{enumerate}
Then we have $\operatorname{Ext}_{R}^{r+1}(k,C)=0$. 
\end{lem}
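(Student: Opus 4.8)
The plan is to reduce the statement to a length count on Ext modules of finite-length modules, resting on one structural inequality. For brevity write $E^{i}(N):=\operatorname{Ext}_{R}^{i}(N,C)$. I would first record two preliminaries coming from $\operatorname{depth}_{R}C=r$: one has $E^{i}(k)=0$ for all $i<r$ and $\ell(E^{r}(k))=\operatorname{r}_{R}(C)$ by definition of type. More generally, any nonzero finite-length module $N$ admits a filtration whose quotients are all isomorphic to $k$, so an induction on $\ell(N)$ through the long exact sequences of $\operatorname{Hom}_{R}(-,C)$ gives $E^{i}(N)=0$ for all $i<r$; in particular $E^{r-1}(N)=0$ for every finite-length $N$.

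The heart of the argument is the inequality $\ell(E^{r}(N))\le \operatorname{r}_{R}(C)\,\ell(N)$ for every finite-length $N$, which I would prove by induction on $\ell(N)$. The case $N\cong k$ is the equality $\ell(E^{r}(k))=\operatorname{r}_{R}(C)$. For $\ell(N)\ge 2$, pick a simple submodule to obtain a short exact sequence $0\to k\to N\to N'\to 0$ with $\ell(N')=\ell(N)-1$; applying $\operatorname{Hom}_{R}(-,C)$ and using the vanishing $E^{r-1}(k)=0$ yields the left-exact piece
\[
0\to E^{r}(N')\to E^{r}(N)\to E^{r}(k),
\]
whence $\ell(E^{r}(N))\le \ell(E^{r}(N'))+\operatorname{r}_{R}(C)\le \operatorname{r}_{R}(C)\ell(N)$ by the inductive hypothesis.

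To finish, I would choose a maximal submodule $M'\subset M$, so that $M/M'\cong k$ and $\ell(M')=\ell(M)-1$, and feed $0\to M'\to M\to k\to 0$ into $\operatorname{Hom}_{R}(-,C)$. Since $E^{r-1}(M')=0$ (preliminary) and $E^{r+1}(M)=0$ (hypothesis (b)), the long exact sequence truncates to
\[
0\to E^{r}(k)\to E^{r}(M)\to E^{r}(M')\to E^{r+1}(k)\to 0.
\]
Additivity of length then gives $\ell(E^{r+1}(k))=\operatorname{r}_{R}(C)-\ell(E^{r}(M))+\ell(E^{r}(M'))$. Hypothesis (a) bounds $\ell(E^{r}(M))$ below by $\operatorname{r}_{R}(C)\ell(M)$, while the inequality of the previous paragraph bounds $\ell(E^{r}(M'))$ above by $\operatorname{r}_{R}(C)\ell(M')=\operatorname{r}_{R}(C)(\ell(M)-1)$. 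Substituting, the right-hand side is at most $\operatorname{r}_{R}(C)\bigl(1-\ell(M)+\ell(M')\bigr)=0$, forcing $\ell(E^{r+1}(k))=0$, that is $\operatorname{Ext}_{R}^{r+1}(k,C)=0$.

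The only delicate point I anticipate is the bookkeeping in the two long exact sequences: the argument works precisely because all "sub-depth" terms $E^{i}(-)$ with $i<r$ vanish on finite-length modules, so both sequences collapse to the short forms displayed above and length becomes additive on exactly the four terms that matter. Everything past that collapse is elementary additivity of length, and it is worth noting that hypothesis (a) is used only as a lower bound, so the equality $\ell(E^{r}(M))=\operatorname{r}_{R}(C)\ell(M)$ is not even needed explicitly.
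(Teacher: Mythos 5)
Your proof is correct and follows essentially the same route as the paper's: both establish the bound $\ell(\operatorname{Ext}_{R}^{r}(N,C))\leq \operatorname{r}_{R}(C)\ell(N)$ for finite-length $N$ by inducting along a composition series, and both finish with the four-term exact sequence coming from a maximal submodule of $M$ together with hypotheses (a) and (b). The only cosmetic differences are that you induct on simple submodules rather than descending the composition series from the top, and your inequality bookkeeping sidesteps the paper's intermediate observation that all the inequalities in its chain become equalities.
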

\begin{proof}
We set $t=\operatorname{r}_{R}(C)$, and $l=\ell(M)$.
Take a composition series $0=N_{l}\subsetneq \cdots N_{1}\subsetneq N_{0}=M$ of $M$.
Then there exists an exact sequence 
\[
0\to N_{i}\to N_{i-1}\to k\to 0
\]
for each $1\leq i\leq l$.
Note that $\operatorname{Ext}_{R}^{j}(L,C)=0$ for all $R$-modules $L$ of finite length and integers $j<r$ by \cite[Theorem 17.1]{Mat2}.
Applying the functor $\operatorname{Hom}_{R}(-,C)$ to the above exact sequence, we obtain the following exact sequence for each $1\leq i\leq l$.
\[
0 \to k^{\oplus t}\to \operatorname{Ext}_{R}^{r}(N_{i-1},C)\to \operatorname{Ext}_{R}^{r}(N_{i},C).
\]
Hence, we have $\ell(\operatorname{Ext}_{R}^{r}(N_{i-1},C))\leq \ell(\operatorname{Ext}_{R}^{r}(N_{i},C))+t$ for each $1\leq i \leq r$. 
Therefore, we obtain the following:
\begin{align*}
        \ell(\operatorname{Ext}_{R}^{r}(M,C))&\leq \ell(\operatorname{Ext}_{R}^{r}(N_{1},C))+t\\
                                    &\leq \ell(\operatorname{Ext}_{R}^{r}(N_{2},C))+2t\\
                                    &\leq \cdots\\
                                    &\leq \ell(\operatorname{Ext}_{R}^{r}(N_{l},C))+lt\\
                                    &=tl.
\end{align*}
Combining this with the assumption, the equality $\ell(\operatorname{Ext}_{R}^{r}(M,C))=tl$ holds and all the above inequalities turn into equalities. In particular, one has $\ell(\operatorname{Ext}_{R}^{r}(M,C))= \ell(\operatorname{Ext}_{R}^{r}(N_{1},C))+t$.
The assumption and the exact sequence $0\to N_{1}\to M\to k\to 0$ yield an exact sequence
\[
0\to k^{\oplus t} \to \operatorname{Ext}_{R}^{r}(M,C)\to \operatorname{Ext}_{R}^{r}(N_{1},C)\to \operatorname{Ext}_{R}^{r+1}(k,C) \to 0.
\]
This implies that the equality $\ell(\operatorname{Ext}_{R}^{r+1}(k,C))+\ell(\operatorname{Ext}_{R}^{r}(M,C))= \ell(\operatorname{Ext}_{R}^{r}(N_{1},C))+t$ holds.
Thus, we have $\ell(\operatorname{Ext}_{R}^{r+1}(k,C))=0$, and the assertion follows.
\end{proof}
%%%%%%%%%%%%%%%%%%%%
Now we have reached the main result of this section.
For an $R$-module $M$, we denote by $\operatorname{E}_{R}(M)$ the injective hull of $M$.
\begin{thm}\label{main thm}
Let $(R,\mathfrak{m},k)$ be a Noetherian local ring, and $C$ a finitely generated $R$-module with $\operatorname{depth}_{R}C=r$.
Assume that there exists a Cohen--Macaulay $R$-module $M$ of dimension $s$ satisfying the following two conditions:
\begin{enumerate}[\rm(a)]
\item
The inequality $\operatorname{r}_{R}(C)\operatorname{e}(M)\leq \operatorname{e}(\operatorname{Ext}_{R}^{r-s}(M,C))$ holds.
\item
One has $\operatorname{Ext}_{R}^{i}(M,C)=0$ for all $r-s+1\leq i \leq r+1$.
\end{enumerate}
Then $R$ is Cohen--Macaulay and $C$ is a maximal Cohen--Macaulay $R$-module with finite injective dimension.
Moreover, every Cohen--Macaulay $R$-module $N$ of dimension $s$ satisfies the conditions obtained by replacing $M$ with $N$ in the two conditions above.
\end{thm}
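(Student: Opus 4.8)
The plan is to reduce the general statement to the finite-length situation already handled by Lemma~\ref{lem:case_M_is_finite_length}, by cutting the Cohen--Macaulay module $M$ down by a suitable system of parameters. After a harmless faithfully flat base change $R\to R[t]_{\mathfrak{m}R[t]}$ (which preserves depth, dimension, multiplicity, type, the Cohen--Macaulay property, the relevant $\operatorname{Ext}$ modules, and finiteness of injective dimension, and along which the conclusion descends), I may assume the residue field $k$ is infinite. Then I choose a system of parameters $\bm{x}=x_{1},\dots,x_{s}$ of $M$ generating a minimal reduction of $\mathfrak{m}$ with respect to $M$; since $M$ is Cohen--Macaulay of dimension $s$, the sequence $\bm{x}$ is an $M$-sequence.

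Setting $\bar{M}=M/\bm{x}M$, I would first translate hypotheses (a) and (b) into the hypotheses of Lemma~\ref{lem:case_M_is_finite_length} for the finite-length module $\bar{M}$. The first lemma of this section gives $\ell(\bar{M})=\operatorname{e}(M)$, and Lemma~\ref{lem:regular_M_to_Ext} gives that $\bm{x}$ is an $E$-sequence for $E:=\operatorname{Ext}_{R}^{r-s}(M,C)$, together with $\operatorname{Ext}_{R}^{r}(\bar{M},C)\cong E/\bm{x}E$ and $\operatorname{Ext}_{R}^{r+1}(\bar{M},C)=0$. The one point needing care here is the identification $\operatorname{e}(E)=\ell(\operatorname{Ext}_{R}^{r}(\bar{M},C))$: since $\operatorname{Ext}_{R}^{r}(\bar{M},C)\cong E/\bm{x}E$ has finite length and $\bm{x}$ is an $E$-sequence of length $s$, the module $E$ is Cohen--Macaulay of dimension $s$ with $\bm{x}$ a system of parameters; and because $\operatorname{Supp}_{R}E\subseteq\operatorname{Supp}_{R}M$, the reduction $\bm{x}$ of $\mathfrak{m}$ with respect to $M$ is also a reduction with respect to $E$, so the first lemma applies to $E$ and yields $\operatorname{e}(E)=\ell(E/\bm{x}E)$. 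Condition (a) now reads $\operatorname{r}_{R}(C)\ell(\bar{M})\le\ell(\operatorname{Ext}_{R}^{r}(\bar{M},C))$, and (b) supplies $\operatorname{Ext}_{R}^{r+1}(\bar{M},C)=0$, so Lemma~\ref{lem:case_M_is_finite_length} applies and gives $\operatorname{Ext}_{R}^{r+1}(k,C)=0$.

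The crux, and the step I expect to be the main obstacle, is to pass from this single vanishing $\operatorname{Ext}_{R}^{r+1}(k,C)=0$ at $r=\operatorname{depth}_{R}C$ to finiteness of the injective dimension. The naive approach via cosyzygies of the minimal injective resolution reduces the claim to showing that a certain (non-finitely-generated) cosyzygy with $\operatorname{Ext}_{R}^{1}(k,-)=0$ is injective, which is false without further input; so here I would invoke the deep criterion that $\operatorname{Ext}_{R}^{r+1}(k,C)=0$ with $r=\operatorname{depth}_{R}C$ forces $\operatorname{id}_{R}C=r<\infty$. Granting this, the affirmative solution of Bass's conjecture (cited in the Introduction) yields that $R$ is Cohen--Macaulay, whence $\operatorname{depth}R=\dim R$; combining $\operatorname{id}_{R}C=r$ with Bass's equality $\operatorname{id}_{R}C=\operatorname{depth}R$ gives $r=\operatorname{depth}_{R}C=\dim R$, so that $C$ is a maximal Cohen--Macaulay module of finite injective dimension.

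For the final \emph{moreover} assertion I would use the structure just obtained. By Ischebeck's theorem, finiteness of $\operatorname{id}_{R}C=r$ gives $\operatorname{Ext}_{R}^{i}(N,C)=0$ for all $i>r-\operatorname{depth}N$ and every nonzero finitely generated $N$; taking $N$ Cohen--Macaulay of dimension $s$ (so $\operatorname{depth}N=s$) yields condition (b). For condition (a), after completing I may assume $R$ has a canonical module $\omega$; since $C$ is maximal Cohen--Macaulay of finite injective dimension, $\operatorname{Hom}_{R}(C,\omega)$ is maximal Cohen--Macaulay of finite projective dimension, hence free, so $C\cong\omega^{\oplus \operatorname{r}_{R}(C)}$. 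Then $\operatorname{Ext}_{R}^{r-s}(N,C)\cong\operatorname{Ext}_{R}^{r-s}(N,\omega)^{\oplus \operatorname{r}_{R}(C)}$ is $\operatorname{r}_{R}(C)$ copies of the canonical module $K_{N}$ of the Cohen--Macaulay module $N$, and since $\operatorname{e}(K_{N})=\operatorname{e}(N)$ one obtains $\operatorname{e}(\operatorname{Ext}_{R}^{r-s}(N,C))=\operatorname{r}_{R}(C)\operatorname{e}(N)$, giving condition (a) with equality.
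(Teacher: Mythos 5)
Your argument for the main assertion is essentially the paper's own proof: reduce to an infinite residue field, cut by a system of parameters generating a reduction of $\mathfrak{m}$, feed $M/\bm{x}M$ into Lemmas \ref{lem:regular_M_to_Ext} and \ref{lem:case_M_is_finite_length} to get $\operatorname{Ext}_{R}^{r+1}(k,C)=0$, and then quote the criterion of \cite{FFGR} together with the Bass conjecture and Bass's equality, exactly as the paper does. (One presentational difference: the paper chooses the reduction as an ideal-theoretic reduction in $R/\operatorname{ann}_{R}M$, which makes the transfer to $E=\operatorname{Ext}_{R}^{r-s}(M,C)$ immediate because $\operatorname{ann}_{R}M\subseteq\operatorname{ann}_{R}E$; your support-containment justification is correct but tacitly uses the standard fact that reductions are insensitive to nilpotents.) The only genuine divergence is in the \emph{moreover} part. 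The paper completes, writes $C\cong\omega_{R}^{\oplus t}$, obtains condition (b) from local duality, and reduces condition (a) to an explicitly proven Claim, $\operatorname{e}(M)=\operatorname{e}(\operatorname{Hom}_{R}(M,\omega_{R}))$ for maximal Cohen--Macaulay $M$, established by a reduction/length computation. You instead get (b) from Ischebeck-type upper vanishing over $R$ itself (avoiding completion for that step) and (a) from the equality $\operatorname{e}(K_{N})=\operatorname{e}(N)$, which you assert as known. That equality is true and standard (e.g.\ by the associativity formula, since $(K_{N})_{\mathfrak{p}}$ is the Matlis dual of $N_{\mathfrak{p}}$ over $R_{\mathfrak{p}}$ at each minimal prime $\mathfrak{p}$ of $\operatorname{Supp}N$ with $\dim R/\mathfrak{p}=s$), but it is precisely the content of the paper's Claim, so a self-contained write-up should either cite it or reproduce that short argument; likewise your derivation of $C\cong\omega^{\oplus\operatorname{r}_{R}(C)}$ via freeness of the canonical dual is a rephrasing of \cite[Exercise 3.3.28(a)]{BH}, which the paper cites directly. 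Both routes establish the same statement with comparable effort; yours trades the paper's explicit computations for two additional standard citations.
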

\begin{proof}
We may assume that $k$ is infinite; see \cite[Lemma 8.4.2]{HS} for instance. 
Combining \cite[Theorem 1.1]{FFGR} and \cite[Corollary 9.6.2, Remarks 9.6.4, and Theorem 3.1.17]{BH}, it suffices to show that $\operatorname{Ext}_{R}^{r+1}(k,C)=0$ for the assertion in the first half.  
By virtue of \cite[Theorem 14.14.]{Mat2}, we can take a sequence $\bm{y}=\overline{x_{1}},\ldots, \overline{x_{s}}$ of elements in $R/\operatorname{ann}_{R}M$ such that $\bm{y}$ forms a system of parameters of $R/\operatorname{ann}_{R}M$ and the ideal $(\bm{y})$ is a reduction of $\mathfrak{m}/\operatorname{ann}_{R}M$.
Note that the ideal generated by $\bm{x}=x_{1},\ldots,x_{s}$ is a reduction of $\mathfrak{m}$ with respect to both $M$ and $\operatorname{Ext}_{R}^{r-s}(M,C)$.
Combining the fact that $M$ is Cohen--Macaulay and Lemma \ref{lem:regular_M_to_Ext}, we see that $\bm{x}$ is both an $M$-sequence and an $\operatorname{Ext}_{R}^{r-s}(M,C)$-sequence.
Hence, we have the following equalities:
\begin{align*}
\operatorname{e}(M)&=\operatorname{e}(\bm{x},M)=\ell(M/\bm{x}M), \text{ and} \\
\operatorname{e}(\operatorname{Ext}_{R}^{r-s}(M,C))&=\operatorname{e}(\bm{x},\operatorname{Ext}_{R}^{r-s}(M,C))=\ell(\operatorname{Ext}_{R}^{r-s}(M/\bm{x}M,C)).
\end{align*}
By the assumption and Lemma \ref{lem:regular_M_to_Ext}, the inequality $\operatorname{r}_{R}(C)\ell(M/\bm{x}M)\leq \ell(\operatorname{Ext}_{R}^{r-s}(M/\bm{x}M,C))$ holds and one has $\operatorname{Ext}_{R}^{r+1}(M/\bm{x}M,C)=0$.
Therefore, we conclude that $\operatorname{Ext}_{R}^{r+1}(k,C)=0$ by Lemma \ref{lem:case_M_is_finite_length}. 

We shall prove the assertion in the latter part. 
We may assume that $R$ is complete. It follows that $C\cong\omega_{R}^{\oplus t}$, where $t=\operatorname{r}_{R}(C)$ and $\omega_{R}$ is a canonical module of $R$; see \cite[Exercises 3.3.28(a)]{BH} for instance.
Let $N$ be a Cohen--Macaulay $R$-module of dimension $s$. By \cite[Theorems 3.5.7 and 3.5.8]{BH}, one has $\operatorname{Ext}_{R}^{i}(N,C)\cong \operatorname{Hom}_{R}(H_{\mathfrak{m}}^{d-i}(N),\operatorname{E}_{R}(k))^{\oplus t}=0$ for all integers $i\neq d-s$. Therefore, the condition (b) for $N$ holds. Note that the condition (a) for $N$ holds if and only if the inequality $\operatorname{e}(N)\leq \operatorname{e}(\operatorname{Ext}_{R}^{d-s}(N,\omega_{R}))$ holds.
Since $\operatorname{grade}_{R}N=d-s$, we can take an $R$-sequence $\bm{x}=x_{1},\ldots, x_{d-s}$ in $\operatorname{ann}_{R}N$. It follows that $N$ is a maximal Cohen--Macaulay $R/(\bm{x})$-module, and $\bm{x}$ is an $\omega_{R}$-sequence.
Hence, we have $\operatorname{Ext}_{R}^{d-s}(N,\omega_{R})\cong \operatorname{Hom}_{R/(\bm{x})}(N,\omega_{R}/\bm{x}\omega_{R})\cong\operatorname{Hom}_{R/(\bm{x})}(N,\omega_{R/(\bm{x})})$.
Thus, it suffices to prove the following.
\begin{claim*}
Let $(R,\mathfrak{m},k)$ be a Cohen--Macaulay local ring with a canonical module $\omega_{R}$. If $M$ is a maximal Cohen--Macaulay $R$-module, then one has $\operatorname{e}(M)=\operatorname{e}(\operatorname{Hom}_{R}(M,\omega_{R}))$.  
\end{claim*}
We may assume that $k$ is infinite. 
By \cite[Theorem 14.14]{Mat2}, there exists a sequence $\bm{x}=x_{1},\ldots,x_{d}$ of elements in $R$ such that $\bm{x}$ forms a system of parameters of $R$, and $(\bm{x})$ is a reduction of $\mathfrak{m}$.
As in the proof of the first part, applying \cite[Propositions 3.3.3(a) and 3.2.12]{BH} yields $\operatorname{e}(M)=\ell(M/\bm{x}M)=\ell(\operatorname{Hom}_{R/(\bm{x})}(M/\bm{x}M,\omega_{R}/\bm{x}\omega_{R}))=\ell(\operatorname{Hom}_{R}(M,\omega_{R})\otimes_{R}R/(\bm{x}))=\operatorname{e}(\operatorname{Hom}_{R}(M,\omega_{R}))$.
This completes the proof.
\end{proof}
%%%%%%%%%%%%%%%%%%%%
\begin{rem}\label{recover_RT}
The result stated as Theorem \ref{RT_int} by Rahmani and Taherizadeh can be recovered using Theorem \ref{main thm} as follows. 
Factoring $M$ by a maximal $M$-sequence, we may assume that $M$ has finite length (for the basic properties of $\operatorname{G}_{C}$-dimension, see \cite{G}).
We set $r=\operatorname{depth}R$ and $N=\operatorname{Ext}_{R}^{r}(M,C)$.
Note that $N$ has finite length, and $\operatorname{G}_{C}\operatorname{dim}_{R}M=r$.
Take a sequence $\bm{x}=x_{1},\ldots,x_{r}$ of elements in $\operatorname{ann}_{R}M$ that forms both an $R$-sequence and a $C$-sequence.
Then it follows that $M$ is a totally $C/\bm{x}C$-reflexive $R/(\bm{x})$-module and so is $\operatorname{Hom}_{R/(\bm{x})}(M,C/\bm{x}C)\cong N$.
Hence, we have $\operatorname{Ext}_{R}^{r+1}(M,C)\cong\operatorname{Ext}^{1}_{R/(\bm{x})}(M,C/\bm{x}C)=0$, and  $\operatorname{Ext}_{R}^{r+1}(N,C)\cong\operatorname{Ext}^{1}_{R/(\bm{x})}(N,C/\bm{x}C)=0$.
Moreover, one has $\operatorname{Ext}_{R}^{r}(N,C)\cong\operatorname{Hom}_{R/(\bm{x})}(\operatorname{Hom}_{R/(\bm{x})}(M,C/\bm{x}C),C/\bm{x}C)\cong M$.
Combining these with Theorem \ref{main thm} and \cite[Proposition 3.3.13]{BH}, the assertion holds.
\end{rem}
%%%%%%%%%%%%%%%%%%%%
As an application of Theorem \ref{main thm}, we obtain the following corollary. This result characterizes the Gorenstein property of $R$ in terms of the existence of a Cohen--Macaulay module that satisfies certain conditions, concerning specific numerical invariants and the vanishing of finitely many Ext modules.
\begin{cor}\label{cor_Gor}
Let $R$ be a Noetherian local ring with $\operatorname{depth}R=r$.
Then the following conditions are equivalent:
\begin{enumerate}[\rm(i)]
\item
$R$ is Gorenstein.
\item
There exists a Cohen--Macaulay $R$-module $M$ satisfying the following three conditions:
\begin{enumerate}[\rm(1)]
\item
The dimension of $M$ is equal to $r$.
\item
The inequality $\operatorname{r}(R)\operatorname{e}(M)\leq \operatorname{e}(\operatorname{Hom}_{R}(M,R))$ holds.
\item
One has $\operatorname{Ext}_{R}^{i}(M,R)=0$ for all $1\leq i \leq r+1$.
\end{enumerate}
\end{enumerate}
\end{cor}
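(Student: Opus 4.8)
The plan is to read this corollary as the special case $C=R$ of Theorem~\ref{main thm}, supplemented by the classical fact that a Noetherian local ring has finite self-injective dimension exactly when it is Gorenstein. First I would record the key bookkeeping: since $\operatorname{depth}_{R}R=r$, taking $C=R$ in Theorem~\ref{main thm} is legitimate, and condition (1) of the corollary, namely $\dim M=r$, forces $s=r$ and hence $r-s=0$. Consequently $\operatorname{Ext}_{R}^{r-s}(M,R)=\operatorname{Hom}_{R}(M,R)$, while the index range $r-s+1\le i\le r+1$ collapses to $1\le i\le r+1$. With these identifications, hypotheses (a) and (b) of Theorem~\ref{main thm} become verbatim the conditions (2) and (3) of the corollary.

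For the implication (ii)$\Rightarrow$(i) I would apply Theorem~\ref{main thm} directly to the given module $M$ with $C=R$. The theorem then yields that $R$ is Cohen--Macaulay and that $R$, viewed as a module over itself, is maximal Cohen--Macaulay of finite injective dimension. Since $\operatorname{injdim}_{R}R<\infty$ characterizes the Gorenstein property (see \cite[Theorem 3.1.17]{BH}), this gives that $R$ is Gorenstein.

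For the reverse implication (i)$\Rightarrow$(ii) I would simply exhibit $M=R$ as a witness. Because a Gorenstein ring is Cohen--Macaulay, one has $\dim R=\operatorname{depth}R=r$, so $M=R$ satisfies (1). As $R$ is free over itself, $\operatorname{Ext}_{R}^{i}(R,R)=0$ for all $i\ge1$, which is (3). Finally $\operatorname{r}(R)=1$ for a Gorenstein ring and $\operatorname{Hom}_{R}(R,R)\cong R$, so (2) reduces to the trivial equality $\operatorname{e}(R)\le\operatorname{e}(R)$.

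I expect no genuine obstacle here: the only real content is recognizing that the numerical inequality and the finitely many vanishing conditions align precisely with those of Theorem~\ref{main thm} under the specialization $C=R$, $s=r$, and then invoking the standard characterization of Gorenstein rings by finite self-injective dimension. The single point requiring a moment's care is verifying that it is exactly the hypothesis $\dim M=r$ that makes $\operatorname{Ext}_{R}^{r-s}(M,R)$ degenerate to $\operatorname{Hom}_{R}(M,R)$, so that condition (2) of the corollary is the $s=r$ instance of hypothesis (a).
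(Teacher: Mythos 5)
Your proposal is correct and follows exactly the paper's own argument: the paper likewise proves (ii)$\Rightarrow$(i) by specializing Theorem \ref{main thm} to $C=R$ (so that finite injective dimension of $R$ over itself gives Gorensteinness) and proves (i)$\Rightarrow$(ii) by taking $M=R$. Your write-up merely spells out the bookkeeping ($s=r$, $\operatorname{Ext}^{0}=\operatorname{Hom}$, $\operatorname{r}(R)=1$) that the paper leaves implicit.
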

\begin{proof}
(i)$\Rightarrow$(ii): It suffices to take $M=R$.
(ii)$\Rightarrow$(i): Taking $C=R$ in Theorem \ref{main thm}, we conclude that $R$ has finite injective dimension.
\end{proof}
%%%%%%%%%%%%%%%%%%%%
The following result shows that the finiteness of the injective dimension of a maximal Cohen--Macaulay module is determined by a condition on numerical invariants.
\begin{cor}\label{cor:case_M_is_R}
Let $(R,\mathfrak{m},k)$ be a Cohen--Macaulay local ring and $C$ a maximal Cohen--Macaulay $R$-module. If the inequality $\operatorname{r}_{R}(C)\operatorname{e}(R)\leq \operatorname{e}(C)$ holds, then $C$ has finite injective dimension.
\end{cor}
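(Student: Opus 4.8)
The plan is to obtain this as an immediate application of Theorem \ref{main thm}, choosing the auxiliary Cohen--Macaulay module to be $R$ itself. Set $d=\dim R$. Since $R$ is Cohen--Macaulay and $C$ is a maximal Cohen--Macaulay $R$-module, one has $\operatorname{depth}_{R}C=\dim R=d$, so in the notation of Theorem \ref{main thm} we are in the situation $r=d$. I would then take $M=R$, which is a Cohen--Macaulay $R$-module of dimension $s=d$, and verify that it satisfies the two hypotheses of the theorem.

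The key observation is that with this choice $r-s=0$, so the Ext module appearing in condition (a) is $\operatorname{Ext}_{R}^{0}(R,C)=\operatorname{Hom}_{R}(R,C)\cong C$. Consequently condition (a) becomes exactly $\operatorname{r}_{R}(C)\operatorname{e}(R)\leq\operatorname{e}(C)$, which is precisely the hypothesis of the corollary. For condition (b), I would note that since $R$ is free, $\operatorname{Ext}_{R}^{i}(R,C)=0$ for every $i\geq 1$; in particular the required vanishing $\operatorname{Ext}_{R}^{i}(R,C)=0$ for all $1\leq i\leq d+1$ holds trivially. Having checked both conditions, Theorem \ref{main thm} applies and delivers the conclusion that $C$ has finite injective dimension (and, of course, reconfirms that $R$ is Cohen--Macaulay).

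There is essentially no genuine obstacle here, as all of the substantive work is carried out in Theorem \ref{main thm}; the only point requiring care is the identification $r=\operatorname{depth}_{R}C=d$, which is exactly what the maximal Cohen--Macaulay hypothesis on $C$ over the Cohen--Macaulay ring $R$ guarantees. The merit of the corollary is precisely that specializing the auxiliary module to $M=R$ collapses the multiplicity inequality of condition (a) into the clean numerical criterion $\operatorname{r}_{R}(C)\operatorname{e}(R)\leq\operatorname{e}(C)$ while rendering condition (b) automatic.
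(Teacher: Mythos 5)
Your proposal is correct and is exactly the paper's proof: the paper also simply takes $M=R$ in Theorem \ref{main thm}, and your verification that condition (a) reduces to the stated inequality (since $\operatorname{Ext}_{R}^{0}(R,C)\cong C$) and condition (b) is trivial (since $R$ is free) fills in precisely the routine details the paper leaves implicit.
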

\begin{proof}
Taking $M=R$ in Theorem \ref{main thm}, the assertion holds.
\end{proof}
%%%%%%%%%%%%%%%%%%%%
\begin{rem}
There is a direct proof of Corollary \ref{cor:case_M_is_R} without using Theorem \ref{main thm}:
firstly, we consider the case where $d=\operatorname{dim}R=0$.
Note that $\operatorname{e}(R)=\ell(R)$, $\operatorname{e}(C)=\ell(C)$, and $\operatorname{E}_{R}(C)\cong\operatorname{E}_{R}(k)^{\oplus\operatorname{r}_{R}(C)}$. It follows that there exists an exact sequence of the following form: 
\[
0\to C\to\operatorname{E}_{R}(k)^{\oplus\operatorname{r}_{R}(C)}\to D\to0.
\]
Combining this with the assumption, one has $\operatorname{r}_{R}(C)\ell(R)\leq\ell(C)\leq\ell(C)+\ell(D)=\operatorname{r}_{R}(C)\ell(\operatorname{E}_{R}(k))=\operatorname{r}_{R}(C)\ell(R)$.
This implies that $\ell(D)=0$, and that $C\cong\operatorname{E}_{R}(k)^{\oplus\operatorname{r}_{R}(C)}$ has injective dimension zero.
Next, we consider the general case.
We may assume that $k$ is infinite. 
By \cite[Theorem 14.14]{Mat2}, we can take an $R$-sequence $\bm{x}=x_{1},\ldots,x_{d}$ such that $(\bm{x})$ is a reduction of $\mathfrak{m}$. 
Hence, the assumption implies that the inequality $\operatorname{r}_{R/(\bm{x})}(C/\bm{x}C)\ell(R/(\bm{x}))\leq \ell(C/\bm{x}C)$ holds.
Therefore, from the case of dimension zero, it follows that $\operatorname{id}_{R/(\bm{x})}(C/\bm{x}C)=0$, and this implies that $\operatorname{id}_{R}C=d<\infty$.
\end{rem}
%%%%%%%%%%%%%%%%%%%%
Let $R$ be a commutative Noetherian ring and $M$ a finitely generated $R$-module. For a nonnegative integer $r$, we say that $M$ has {\em rank $r$}, denoted by $\operatorname{rank}_{R}M=r$, if $M\otimes_{R}Q$ is a free $Q$-module of rank $r$, where $Q$ is the total ring of fractions of $R$.
Note that $M$ has rank $0$ if and only if the grade of $M$ is positive.
The following result gives a sufficient condition for a maximal Cohen--Macaulay module that has a rank to admit finite injective dimension.
\begin{cor}\label{cor:rank}
Let $R$ be a Cohen--Macaulay local ring and $C$ a maximal Cohen--Macaulay $R$-module. Assume that $C$ has a rank and the inequality $\operatorname{r}_{R}(C)\leq \operatorname{rank}_{R}C$ holds.
Then $C$ has finite injective dimension.
\end{cor}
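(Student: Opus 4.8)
The plan is to reduce everything to Corollary \ref{cor:case_M_is_R}, which already tells us that $C$ has finite injective dimension as soon as we verify the multiplicity inequality $\operatorname{r}_{R}(C)\operatorname{e}(R)\leq\operatorname{e}(C)$. So the whole task is to convert the rank hypothesis into this numerical inequality. The bridge I would establish is the identity $\operatorname{e}(C)=(\operatorname{rank}_{R}C)\operatorname{e}(R)$. Granting this, the assumed inequality $\operatorname{r}_{R}(C)\leq\operatorname{rank}_{R}C$ immediately gives $\operatorname{r}_{R}(C)\operatorname{e}(R)\leq(\operatorname{rank}_{R}C)\operatorname{e}(R)=\operatorname{e}(C)$, and Corollary \ref{cor:case_M_is_R} closes the argument.

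To prove the identity, set $d=\dim R$ and $\rho=\operatorname{rank}_{R}C$, and recall that $C$ is maximal Cohen--Macaulay, so $\dim C=d$. Since $C$ has rank $\rho$, one has $C\otimes_{R}Q\cong Q^{\rho}$; choosing elements of $C$ mapping to a $Q$-basis produces a homomorphism $R^{\rho}\to C$ that becomes an isomorphism after applying $-\otimes_{R}Q$. Because $R\hookrightarrow Q$ is injective and $Q$ is a localization, this map is injective with torsion cokernel, yielding a short exact sequence $0\to R^{\rho}\to C\to T\to 0$ with $T\otimes_{R}Q=0$. The next step is to invoke the additivity of multiplicity in the top dimension $d$ (for modules of dimension at most $d$, with $\operatorname{e}$ taken to be $0$ on modules of dimension $<d$): applied to this sequence it gives $\operatorname{e}(C)=\rho\,\operatorname{e}(R)+\operatorname{e}(T)$. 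It therefore remains to check that $\operatorname{e}(T)=0$, i.e. that $\dim T<d$.

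The point that I would treat most carefully — and the one genuine obstacle — is exactly this last dimension count, namely the passage from $T\otimes_{R}Q=0$ to $\dim T<d$. Here I use that $R$ is Cohen--Macaulay, hence unmixed and equidimensional, so that $\operatorname{Ass}(R)=\operatorname{Min}(R)$ and every minimal prime $\mathfrak{p}$ satisfies $\dim R/\mathfrak{p}=d$. Since the primes of $Q$ correspond to the primes of $R$ contained in the union of the associated primes, the condition $T\otimes_{R}Q=0$ forces $T_{\mathfrak{p}}=0$ for every minimal prime $\mathfrak{p}$; thus $\operatorname{Supp}(T)$ contains no minimal prime of $R$, and equidimensionality then gives $\dim T<d$, whence $\operatorname{e}(T)=0$. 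Without the Cohen--Macaulay (equidimensionality) hypothesis one could not conclude that killing the minimal primes drops the dimension strictly, so this is where that assumption is essential; the remaining ingredients — the construction of the torsion sequence and the additivity of multiplicity — are entirely standard.
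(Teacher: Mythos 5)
Your proposal is correct, and at the top level it is the same argument as the paper's: both reduce to Corollary \ref{cor:case_M_is_R} by establishing the identity $\operatorname{e}(C)=\operatorname{rank}_{R}(C)\operatorname{e}(R)$ and then chaining the hypothesis $\operatorname{r}_{R}(C)\leq\operatorname{rank}_{R}C$ through it. The difference is that the paper simply cites \cite[Corollary 4.7.9]{BH} for this identity, whereas you reprove it from scratch via the exact sequence $0\to R^{\rho}\to C\to T\to 0$ with $T$ torsion and additivity of multiplicity — which is essentially the standard proof of that cited result (your construction of the sequence, by picking elements of $C$ mapping to a $Q$-basis, is fine: any $Q$-basis of $C\otimes_{R}Q$ can be scaled by the units $s_{i}\in Q$ clearing denominators so as to lie in the image of $C$; this is \cite[Proposition 1.4.3]{BH}). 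One remark in your write-up is inaccurate, though it does not create a gap: the Cohen--Macaulay hypothesis is not what makes the final dimension count work. In any Noetherian local ring the minimal primes are associated primes, so a torsion module vanishes at every minimal prime; and any $\mathfrak{q}\in\operatorname{Supp}(T)$ then strictly contains some minimal prime $\mathfrak{p}$, whence $\dim R/\mathfrak{q}<\dim R/\mathfrak{p}\leq d$. Thus $\dim T<d$, and hence the identity $\operatorname{e}(C)=\operatorname{rank}_{R}(C)\operatorname{e}(R)$, holds over an arbitrary Noetherian local ring, with no equidimensionality needed — which is why \cite[Corollary 4.7.9]{BH} carries no such hypothesis. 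Where Cohen--Macaulayness of $R$ (and maximal Cohen--Macaulayness of $C$) is genuinely essential is in invoking Corollary \ref{cor:case_M_is_R} itself, i.e., in applying the main theorem with $M=R$.
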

\begin{proof}
Since $C$ is a maximal Cohen--Macaulay $R$-module, it has positive rank. By \cite[Corollary 4.7.9]{BH}, it follows that $\operatorname{e}(C)=\operatorname{e}(R)\operatorname{rank}_{R}(C)$.
Hence, we have $\operatorname{r}_{R}(C)\operatorname{e}(R)\leq \operatorname{rank}_{R}(C)\operatorname{e}(R)=\operatorname{e}(C)$. Thus by Corollary \ref{cor:case_M_is_R}, we obtain $\operatorname{id}_{R}C<\infty$.
\end{proof}
%%%%%%%%%%%%%%%%%%%%
\begin{rem}
Let $R$ be a Cohen--Macaulay local ring and $C$ a maximal Cohen--Macaulay $R$-module with a (positive) rank. If $\operatorname{r}_{R}(C)=1$, then $C$ has finite injective dimension by \cite[Propositions 1.4.3 and 3.3.13]{BH}. Thus, Corollary \ref{cor:rank} is a generalization of the case where $C$ has type one.
\end{rem}
%%%%%%%%%%%%%%%%%%%%
We close this section by giving a result obtained by taking $M=C$ in Theorem \ref{main thm}. 
\begin{cor}\label{cor_M=C}
Let $R$ be a Noetherian local ring, and $C$ a Cohen--Macaulay $R$-module of dimension $n$ satisfying the following two conditions:
\begin{enumerate}[\rm(a)]
\item
The inequality $\operatorname{r}_{R}(C)\operatorname{e}(C)\leq \operatorname{e}(\operatorname{End}_{R}(C))$ holds.
\item
One has $\operatorname{Ext}_{R}^{i}(C,C)=0$ for all $1\leq i \leq n+1$.
\end{enumerate}
Then $R$ is Cohen--Macaulay and $C$ is a maximal Cohen--Macaulay $R$-module with finite injective dimension.
\end{cor}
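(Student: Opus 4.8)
The plan is to deduce this corollary as the special case $M = C$ of Theorem \ref{main thm}, so the entire argument consists of checking that the hypotheses here translate verbatim into the hypotheses there. The first thing I would record is the value of the depth parameter $r$ appearing in Theorem \ref{main thm}. Since $C$ is a Cohen--Macaulay $R$-module of dimension $n$, it is in particular a Cohen--Macaulay module, so $\operatorname{depth}_{R}C = \dim_{R}C = n$; hence $r = n$ in the notation of the main theorem.

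Next I would apply Theorem \ref{main thm} with the role of the auxiliary Cohen--Macaulay module $M$ played by $C$ itself. This is a Cohen--Macaulay $R$-module, and its dimension $s$ equals $\dim_{R}C = n$. The crucial observation is that with this choice we have $s = n = r$, so the shift $r - s$ collapses to $0$. Consequently the Ext module featuring in condition (a) of Theorem \ref{main thm} becomes
\[
\operatorname{Ext}_{R}^{r-s}(M,C) = \operatorname{Ext}_{R}^{0}(C,C) = \operatorname{Hom}_{R}(C,C) = \operatorname{End}_{R}(C),
\]
and the range of indices in condition (b) becomes $r-s+1 \leq i \leq r+1$, that is, $1 \leq i \leq n+1$.

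With these identifications in hand, condition (a) of the present corollary, namely $\operatorname{r}_{R}(C)\operatorname{e}(C) \leq \operatorname{e}(\operatorname{End}_{R}(C))$, is exactly condition (a) of Theorem \ref{main thm} for the module $M = C$, and condition (b) of the corollary, namely $\operatorname{Ext}_{R}^{i}(C,C) = 0$ for $1 \leq i \leq n+1$, is exactly condition (b) there. I would therefore invoke Theorem \ref{main thm} directly to conclude that $R$ is Cohen--Macaulay and that $C$ is a maximal Cohen--Macaulay $R$-module of finite injective dimension, which is the desired assertion. There is no genuine obstacle in this argument; the only point that requires a moment's care is the verification that the Cohen--Macaulayness of $C$ forces $\operatorname{depth}_{R}C = \dim_{R}C$, which is what makes $r = s$ and thereby produces the endomorphism ring in place of the higher Ext module.
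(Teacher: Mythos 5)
Your proposal is correct and is exactly the paper's own argument: the paper presents this corollary as "a result obtained by taking $M=C$ in Theorem \ref{main thm}," which is precisely your specialization, with the key observation that Cohen--Macaulayness of $C$ gives $r=s=n$ so that $\operatorname{Ext}_{R}^{r-s}(C,C)=\operatorname{End}_{R}(C)$ and the Ext-vanishing range becomes $1\leq i\leq n+1$.
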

\begin{rem}
Let $R$ be a Noetherian local ring.
For an integer $n\geq2$ and a finitely generated $R$-module $C$, we say that $C$ is {\em $n$-semidualizing} if the natural homomorphism $R\to\operatorname{End}_{R}(C)$ is an isomorphism and one has $\operatorname{Ext}_{R}^{i}(C,C)=0$ for all $1\leq i\leq n$.
The terminology is based on \cite{T2007}.
Let $C$ be a Cohen--Macaulay $R$-module of dimension $n>0$. If $C$ is $(n+1)$-semidualizing and the inequality $\operatorname{r}_{R}(C)\operatorname{e}(C)\leq \operatorname{e}(R)$ holds, then $R$ is Cohen--Macaulay and $C$ is a canonical module of $R$.
Indeed, it follows that $R$ is Cohen--Macaulay and $C$ is a maximal Cohen--Macaulay $R$-module with finite injective dimension by Corollary \ref{cor_M=C}.
Hence, we have $\widehat{R}\cong\operatorname{End}_{\widehat{R}}(\widehat{C})\cong\widehat{R}^{\oplus t^{2}}$, where $t=\operatorname{r}_{R}(C)$. This implies that $t=1$. Consequently, $C$ is a faithful maximal Cohen--Macaulay $R$-module of type one. Therefore, the assertion follows from \cite[Proposition 3.1.13]{BH}.
\end{rem}
%%%%%%%%%%%%%%%%%%%%
%%%%%%%%%%%%%%%%%%%%
\begin{ac}
The authors would like to thank their supervisor Ryo Takahashi for giving many thoughtful questions and helpful discussions. They also thank Kaito Kimura and Yuya Otake for their valuable comments.
\end{ac}
%%%%%%%%%%%%%%%%%%%%
%%%%%%%%%%%%%%%%%%%%

\end{document}